\newcommand{\IN}{\mathbb N}
\newcommand{\w}{\omega}
\newcommand{\e}{\varepsilon}
\newcommand{\supp}{\mathrm{supp}}
\newcommand{\A}{\mathcal A}
\newtheorem{theorem}{Theorem}[section]
\newtheorem{corollary}[theorem]{Corollary}
\newtheorem{lemma}[theorem]{Lemma}
\newtheorem{proposition}[theorem]{Proposition}
\newtheorem{example}[theorem]{Example}
\newtheorem{problem}[theorem]{Problem}
\theoremstyle{definition}
\newtheorem{remark}[theorem]{Remark}
\title{Syndetic submeasures and partitions of $G$-spaces and groups}
\author{Taras Banakh, Igor Protasov, Sergiy Slobodianiuk}
\address{T.Banakh: Ivan Franko National University of Lviv, Ukraine and Jan Kochanowski University in Kielce, Poland.}
\address{I.Protasov and S.Slobodianiuk: Taras Shevchenko National University, Kyiv, Ukraine}
\email{t.o.banakh@gmail.com, i.v.protasov@gmail.com, slobodianiuk@yandex.ru}
\subjclass{43A07, 05B40, 20F24, 28A12, 28C10, 54H11}
\keywords{Subamenable groups, amenable groups, thick and prethick subsets of groups}
\thanks{The first author has been partially financed by NCN grant  DEC-2011/01/B/ST1/01439.}
\begin{document}
\begin{abstract}
We prove that for every $k\in\IN$ each countable infinite group $G$ admits a partition $G=A\cup B$ into two sets which are {\em $k$-meager} in the sense that for every $k$-element subset $K\subset G$ the sets $KA$ and $KB$ are not thick. The proof is based on the fact that $G$ possesses a {\em syndetic submeasure}, i.e., a left-invariant submeasure $\mu:\mathcal P(G)\to[0,1]$ such that for each $\e>\frac1{|G|}$ and subset $A\subset G$ with $\mu(A)<1$ there is a set $B\subset G\setminus A$ such that $\mu(B)<\e$ and $FB=G$ for some finite subset $F\subset G$.
\end{abstract}

\maketitle

In this paper we continue the studies \cite{P1}--\cite{PZ} of combinatorial properties of partitions of $G$-spaces and groups.

By a {\em $G$-space} we understand a non-empty set $X$ endowed with a left action of a group $G$. The image of a point $x\in X$ under the action of an element $g\in G$ is denoted by $gx$. For two subsets $F\subset G$ and $A\subset X$ we put $FA=\{fa:f\in F,\;a\in A\}\subset X$.

\section{Prethick sets in partitions of $G$-spaces}

A subset $A$ of a $G$-space $X$ is called
\begin{itemize}
\item {\em large} if $FA=X$ for some finite subset $F\subset G$;
\item {\em thick} if for each finite subset $F\subset G$ there is a point $x\in X$ with $Fx\subset A$;
\item {\em prethick} if $KA$ is thick for some finite set $K\subset G$.
\end{itemize}

Now we insert number parameters in these definitions. Let $k,m\in\IN$. A subset $A$ of a $G$-space $X$ is called
\begin{itemize}
\item {\em $m$-large} if $FA=X$ for some subset $F\subset G$ of cardinality $|F|\le m$;
\item {\em $m$-thick} if for each finite subset $F\subset G$ of cardinality $|F|\le m$ there is a point $x\in X$ with $Fx\subset A$;
\item {\em $(k,m)$-prethick} if $KA$ is $m$-thick for some set $K\subset G$ of cardinality $|K|\le k$;
\item {\em $k$-prethick} if $KA$ is thick for some set $K\subset G$ of cardinality $|K|\le k$;
\item {\em $k$-meager} if $A$ is not $k$-prethick (i.e., $KA$ is not thick for any subset $K\subset G$ of cardinality $|K|\le k$).
\end{itemize}

In the dynamical terminology \cite[4.38]{HS}, large subsets are called syndetic and prethick subsets are called piecewise syndetic. We note also that these notions can be defined in much more general context of balleans \cite{PB}, \cite{PZ}.

The following proposition is well-known \cite[4.41]{HS}, \cite[1.3]{P4}, \cite[11.2]{PB}.

\begin{proposition}\label{p1} For any finite partition $X=A_1\cup\dots\cup A_n$ of a $G$-space $X$ one of the cells $A_i$ is prethick and hence $k$-prethick for some $k\in\IN$.
\end{proposition}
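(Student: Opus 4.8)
The plan is to reduce the proposition to a single dichotomy lemma and then iterate it. Concretely, I would isolate the statement: \emph{if $A\cup B$ is a thick subset of $X$, then $A$ is prethick or $B$ is thick.} Granting this, I would prove the slightly stronger claim that every partition $T=A_1\cup\dots\cup A_n$ of a \emph{thick} set $T$ has a prethick cell, arguing by induction on $n$. The case $n=1$ is trivial, since then $A_1=T$ is thick and hence prethick. For the inductive step I would write $T=A_n\cup(A_1\cup\dots\cup A_{n-1})$ and apply the lemma with $A=A_n$ and $B=A_1\cup\dots\cup A_{n-1}$: either $A_n$ is prethick and we are done, or else $B$ is a thick set partitioned into $n-1$ cells, so the inductive hypothesis produces a prethick cell among $A_1,\dots,A_{n-1}$.

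To obtain Proposition~\ref{p1} from this, I would note that $X$ itself is thick: it is non-empty, and $Fx\subset X$ for every finite $F\subset G$ and every $x\in X$. Applying the strengthened claim to $T=X$ then yields a prethick cell $A_i$. The closing assertion that $A_i$ is $k$-prethick for some $k\in\IN$ is immediate from the definitions: choosing a finite $K\subset G$ with $KA_i$ thick, the value $k=|K|$ works.

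The heart of the argument, and the step I expect to be the main obstacle, is the dichotomy lemma. I would prove it in contrapositive form: assuming $A\cup B$ is thick but $B$ is not, I would show $A$ is prethick. Non-thickness of $B$ gives a finite $F_0\subset G$ with $F_0x\not\subset B$ for all $x\in X$, and the key (and least obvious) move is to guess that $K=F_0^{-1}$ witnesses prethickness of $A$, i.e.\ that $F_0^{-1}A$ is thick. To verify this, given an arbitrary finite $F\subset G$ I would exploit thickness of $A\cup B$ against the \emph{enlarged} finite set $F_0F$ to find $y$ with $(F_0F)y\subset A\cup B$. Then for each $f\in F$ the slice $F_0fy$ lies in $A\cup B$ but, by the choice of $F_0$ applied to the point $fy$, is not contained in $B$; hence it meets $A$, which is exactly the statement $fy\in F_0^{-1}A$. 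Thus $Fy\subset F_0^{-1}A$, and as $F$ was arbitrary, $F_0^{-1}A$ is thick. The only delicate points are making the inverses line up (where $G$ being a group is genuinely used) and realizing that one must test thickness on $F_0F$ rather than on $F$ alone.
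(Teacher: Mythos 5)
Your proof is correct. Note that the paper itself does not prove Proposition~\ref{p1}: it is quoted as well known, with references to \cite{HS}, \cite{P4}, \cite{PB}, so the only in-paper argument to compare against is the proof of the quantitative analogue, Proposition~\ref{p3}. Both that proof and yours are inductions on the number of cells, but the inductive statements are genuinely different. In Proposition~\ref{p3}, failure of prethickness of a cell $A_0$ yields a finite set $F$ with $Fx\not\subset A_0$ for all $x\in X$, hence $X=F^{-1}A_1\cup\dots\cup F^{-1}A_n$, and the induction is applied to this cover of the \emph{whole space} by translated cells, the final witness being a product $EF^{-1}$ of witnesses accumulated along the way. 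You instead keep the ambient space fixed and induct on partitions of a \emph{thick subset}, with your dichotomy lemma ($A\cup B$ thick and $B$ not thick imply $A$ prethick, witnessed by $K=F_0^{-1}$) playing the role of the translation step; your verification that $Fy\subset F_0^{-1}A$ whenever $(F_0F)y\subset A\cup B$ is exactly the combinatorial core hidden in the paper's product step. The trade-off: the paper's scheme tracks cardinalities and yields the explicit bound of $(m^{n-1},m)$-prethickness, which yours does not give directly (the size of the set $F_0$ witnessing non-thickness is not controlled); in exchange, your scheme proves a slightly stronger qualitative statement --- every finite partition of a thick set, not only of all of $X$, has a prethick cell --- and produces a witness that is a single set $F_0^{-1}$ rather than a product. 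Both routes are valid, and yours is a complete, self-contained proof of the stated proposition.
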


For finite groups the number $k$ in this proposition can be bounded from above by $n(\ln(\frac{|G|}n)+1)$. We consider each group $G$ as a $G$-space endowed the natural left action of $G$.

\begin{proposition}\label{p2}  Let $G$ be a finite group and $n,k\in\IN$ be numbers such that  $k\ge n\cdot\big(\ln(\frac{|G|}n)+1\big)$. For any $n$-partition $G=A_1\cup\dots\cup A_n$ of $G$ one of the cells $A_i$ is $k$-large and hence $k$-prethick.
\end{proposition}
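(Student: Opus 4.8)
The plan is to reduce the statement to a covering estimate. Since $G$ is finite, a cell $A=A_i$ is $k$-large exactly when $G$ can be written as a union of at most $k$ left translates of $A$, because $FA=\bigcup_{g\in F}gA$. First I would apply the pigeonhole principle to the $n$-partition to select a cell $A$ with $|A|\ge\frac{|G|}n$, and put $a=|A|$ and $\delta=\frac a{|G|}\ge\frac1n$ (assuming the nontrivial case $|G|\ge n$, so that $|G|\delta=a\ge1$). Once $A$ is shown to be $k$-large it is automatically $k$-prethick, since then $FA=G$ is thick and $|F|\le k$.

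The core is a greedy covering argument. For any nonempty set $U\subseteq G$ of not-yet-covered points, each point $x\in G$ lies in exactly $a$ of the translates $\{gA:g\in G\}$ (indeed $x\in gA$ iff $g\in xA^{-1}$, a set of size $a$), so averaging gives $\sum_{g\in G}|gA\cap U|=|U|\cdot a$ and hence some translate satisfies $|gA\cap U|\ge|U|\delta$. Selecting such a translate at each step, the uncovered set shrinks from $U$ to at most $|U|(1-\delta)$ while it stays large. I would run this in two phases. While the uncovered set has size at least $\frac1\delta$ the multiplicative decrease applies, so after $t_1$ steps at most $|G|(1-\delta)^{t_1}\le|G|e^{-t_1\delta}$ points remain, and $t_1\approx\frac1\delta\ln(|G|\delta)$ steps drive this below $\frac1\delta$; in the second phase each of the fewer than $\frac1\delta$ remaining points is eliminated by one further translate. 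This produces a covering family $F$ with
\[
|F|\le \frac1\delta\big(\ln(|G|\delta)+1\big).
\]

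The step I expect to be the main obstacle is converting this bound, which depends on the true density $\delta$ of the chosen cell, into the stated bound depending only on $n$; note that a purely multiplicative count would give the weaker estimate $n\ln|G|$, so the additive second phase is genuinely needed. Here I would exploit that the selected cell can only be \emph{larger} than $\frac{|G|}n$. Writing $f(\delta)=\frac1\delta(\ln(|G|\delta)+1)$ and differentiating gives $f'(\delta)=-\frac1{\delta^2}\ln(|G|\delta)$, which is $\le0$ throughout the range $\delta\in[\frac1n,1]$ since $|G|\delta\ge1$. Thus $f$ is nonincreasing there, so the worst case for our cell occurs at the minimal density $\delta=\frac1n$, giving
\[
|F|\le f(\delta)\le f(\tfrac1n)=n\big(\ln(\tfrac{|G|}n)+1\big)\le k.
\]
The only remaining care is the integer rounding in the two phases, which I would handle by choosing the phase-one threshold so that the ceilings are absorbed into the slack of the inequality $f(\delta)\le f(\frac1n)\le k$; this shows $A$ is $k$-large with $|F|\le k$, completing the proof.
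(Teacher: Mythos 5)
Your proposal is correct and has essentially the same skeleton as the paper's proof: pigeonhole to pick a cell $A$ with $|A|\ge\frac{|G|}n$, a covering bound of the form $\frac{|G|}{|A|}\bigl(\ln|A|+1\bigr)$ for the number of translates of $A$ needed to cover $G$, and then the monotonicity of $f(\delta)=\frac1\delta\bigl(\ln(|G|\delta)+1\bigr)$ on $[\frac1n,1]$ to pass from the true density of $A$ to the worst-case density $\frac1n$. The difference is that the paper simply cites this covering bound from \cite{Wein} or \cite[3.2]{BSR} (note that $\frac{|G|}{|A|}(\ln|A|+1)$ is exactly your $f(\delta)$), while you re-derive it by the greedy two-phase argument; you also make explicit, via the derivative computation, the inequality $\frac{|G|}{|A_i|}(\ln|A_i|+1)\le n(\ln(\frac{|G|}n)+1)$ that the paper asserts without comment. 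So your route is self-contained and more elementary, at the cost of reproving a known lemma.

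One caveat about the single step you flagged: the mechanism you propose for the integer rounding, namely absorbing the ceilings ``into the slack of $f(\delta)\le f(\frac1n)\le k$,'' is not reliable, because in the extremal case where every cell has size exactly $\frac{|G|}n$ and $k=\lceil n(\ln(\frac{|G|}n)+1)\rceil$, the total slack in those two inequalities can be smaller than the potential loss of almost $1$ coming from the ceilings. The gap does close, but by integrality rather than by slack: your two phases give a cover of size at most $\lceil \frac1\delta\ln(|G|\delta)\rceil+\lfloor\frac1\delta\rfloor$, and since $\lfloor y\rfloor$ is an integer, $\lceil x\rceil+\lfloor y\rfloor=\lceil x+\lfloor y\rfloor\rceil\le\lceil x+y\rceil$; hence the cover has size at most $\lceil f(\delta)\rceil$, and $\lceil f(\delta)\rceil\le k$ because $k$ is an integer with $k\ge f(\frac1n)\ge f(\delta)$. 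Alternatively, the standard charging analysis of the greedy cover (each newly covered point is charged the reciprocal of the number of points covered at its step, and the points of any fixed translate $gA$ receive total charge at most the harmonic number $H_{|A|}$) yields the clean bound $\frac{|G|}{|A|}H_{|A|}\le f(\delta)$ with no rounding loss at all; this is in essence how the results cited by the paper are proved.
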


\begin{proof} One of the cells $A_i$ of the partition has cardinality $|A_i|\ge\frac{|G|}{n}$. Then by \cite{Wein} or \cite[3.2]{BSR}, there is a subset $B\subset G$ of cardinality $|B|\le  \frac{|G|}{|A_i|}(\ln|A_i|+1)\le n(\ln(\frac{|G|}n) +1)\le k$ such that $G=BA_i$. It follows that the set $A_i$ is $k$-large and hence $k$-prethick.
\end{proof}

For $G$-spaces we have the following quantitative version of Proposition~\ref{p1}.

\begin{proposition}\label{p3} Let $m,n\in\IN$. For any $n$-partition $X=A_1\cup\dots\cup A_n$ of a $G$-space $X$ one of the cells $A_i$ is $(m^{n-1},m)$-prethick in $X$.
\end{proposition}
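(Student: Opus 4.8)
The plan is to induct on the number $n$ of cells. For $n=1$ the single cell $A_1=X$ is trivially $m$-thick (for any $F$ with $|F|\le m$ and any $x\in X$ one has $Fx\subset X$), hence $(1,m)$-prethick with $1=m^0=m^{n-1}$, so the base case holds. For the inductive step I assume the proposition for partitions into $n-1$ cells and take an $n$-partition $X=A_1\cup\dots\cup A_n$. If the cell $A_n$ is already $m$-thick, then it is $(1,m)$-prethick, and since $1\le m^{n-1}$ we are done. So I may assume $A_n$ is not $m$-thick.

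The crucial observation is what this failure yields. Unfolding the definition, there is a set $F\subset G$ with $|F|\le m$ such that no point $x$ satisfies $Fx\subset A_n$; equivalently, every $x\in X$ admits some $f\in F$ with $fx\in Y:=X\setminus A_n=A_1\cup\dots\cup A_{n-1}$. This is precisely the covering identity $X=F^{-1}Y=\bigcup_{f\in F}f^{-1}Y$. In other words, the non-$m$-thickness of a single cell buys a covering of the entire $G$-space by at most $m$ translates of the union of the remaining cells, and this is the step that drives the whole argument.

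Next I would turn this covering into a genuine $(n-1)$-partition of $X$. Setting $D_i=F^{-1}A_i$ for $i<n$, the identity $X=F^{-1}Y$ says the sets $D_1,\dots,D_{n-1}$ cover $X$, and I disjointify them by $D_i'=D_i\setminus\bigcup_{j<i}D_j$ to obtain an $(n-1)$-partition $X=D_1'\cup\dots\cup D_{n-1}'$. The inductive hypothesis then supplies an index $i$ and a set $K'\subset G$ with $|K'|\le m^{n-2}$ such that $K'D_i'$ is $m$-thick.

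Finally I would transport this back to $A_i$ by putting $K=K'F^{-1}$. Then $|K|\le|K'|\,|F|\le m^{n-2}\cdot m=m^{n-1}$, and since $D_i'\subset D_i=F^{-1}A_i$, associativity of the action gives $KA_i=K'F^{-1}A_i\supset K'D_i'$. As $m$-thickness is monotone under inclusion, $KA_i$ is $m$-thick, so $A_i$ is $(m^{n-1},m)$-prethick, closing the induction. I expect the only genuine content to be the translation of ``$A_n$ not $m$-thick'' into the covering $X=F^{-1}Y$; the disjointification and the cardinality bookkeeping are routine, and the single point demanding care is that replacing $K'$ by $K'F^{-1}$ inflates the translating set by a factor of at most $|F|\le m$, which is exactly what preserves the bound $m^{n-1}$.
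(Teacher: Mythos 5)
Your proof is correct and follows essentially the same inductive argument as the paper: the failure of $m$-thickness of one cell yields the covering $X=F^{-1}(A_1\cup\dots\cup A_{n-1})$, the inductive hypothesis is applied to the resulting $(n-1)$-fold cover, and the translating sets are composed to give the bound $m^{n-2}\cdot m=m^{n-1}$. The only cosmetic difference is that you explicitly disjointify the sets $F^{-1}A_i$ before invoking the inductive hypothesis, whereas the paper applies it directly to the cover (harmless either way, since prethickness is monotone under inclusion).
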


\begin{proof} For $n=1$ the proposition is trivial. Assume that it has been proved for some $n$ and take any partition $X=A_0\cup\dots\cup A_n$ of $X$ into $(n+1)$ pieces. If the cell $A_0$ is $(1,m)$-prethick, then we are done. If not, then there is a set $F\subset G$ of cardinality $|F|\le m$ such that $Fx\not\subset A_0$ for all $x\in X$. This implies that $x\in F^{-1}(A_1\cup\dots\cup A_n)$ and then by the inductive assumption, there is an index $1\le i\le n$ such that the set $F^{-1}A_i$ is $(m^{n-1},m)$-prethick. The latter means that there is a subset $E\subset G$ of cardinality $|E|\le m^{n-1}$ such that $EF^{-1}A_i$ is $m$-thick. Since $|EF^{-1}|\le|E|\cdot|F|\le m^{n-1}m=m^n$, the set $A_i$ is $(m^n,m)$-prethick.
\end{proof}

Looking at Proposition~\ref{p3} it is natural to ask what happens for $m=\w$. Is there any hope to find for every $n\in\IN$ a finite number $k_n$ such that for each $n$-partition $X=A_1\cup\dots\cup A_n$ some cell  $A_i$ of the partition is $k_n$-prethick? In fact, $G$-spaces with this property do exist.

\begin{example}\label{e4} Let $X$ be an infinite set endowed with the natural action of the group $G=S_X$ of all bijections of $X$. Then each subset $A\subset X$ of cardinality $|A|=|X|$ is $2$-large, which implies that for each finite partition $X=A_1\cup \dots \cup A_n$ one of the cells $A_i$ has cardinality $|A_i|=|X|$ and hence is $2$-large and $2$-prethick.
\end{example}

The action of the normal subgroup $FS_X\subset S_X$ consisting of all bijections $f:X\to X$ with finite support $\supp(f)=\{x\in X:f(x)\ne x\}$ has a similar property.

\begin{example}\label{e4a} Let $X$ be an infinite set endowed with the natural action of the group $G=FS_X$ of all finitely supported bijections of $X$. Then each infinite subset $A\subset X$ is thick, which implies that for each finite partition $X=A_1\cup \dots \cup A_n$ one of the cells $A_i$ is infinite and hence is thick and $1$-prethick.
\end{example}

However the $G$-spaces described in Examples~\ref{e4} and \ref{e4a} are rather pathological. In the next section we shall show that each $G$-space admitting a syndetic submeasure for every $k\in\IN$ can be covered by two $k$-meager (and hence not $k$-prethick) subsets. In Section~\ref{s6} using syndetic submeasures we shall prove that each countable infinite group admits a partition into two $k$-meager subsets for every $k\in\IN$.

\section{Syndetic submeasures on $G$-spaces}

A function $\mu:\mathcal P(X)\to[0,1]$ defined on the family of all subsets of a $G$-space $X$ is called \begin{itemize}
\item {\em $G$-invariant} if $\mu(gA)=\mu(A)$ for each $g\in G$ and a subset $A\subset X$;
\item {\em monotone} if $\mu(A)\le\mu(B)$ for any subsets $A\subset B\subset G$;
\item {\em subadditive} if $\mu(A\cup B)\le \mu(A)+\mu(B)$ for any sets $A,B\subset X$;
\item {\em additive} if $\mu(A\cup B)=\mu(A)+\mu(B)$ for any disjoint sets $A,B\subset X$;
\item a {\em submeasure} if $\mu$ is monotone, subadditive, and $\mu(\emptyset)=0$, $\mu(X)=1$;
\item a {\em measure} if $\mu$ is an additive submeasure;
\item a {\em syndetic submeasure} if $\mu$ is a $G$-invariant submeasure such that for each subset $A\subset X$ with $\mu(A)<1$ and each $\e>\frac1{|X|}$ there is a large subset $L\subset X\setminus A$ of submeasure $\mu(L)<\e$.
\end{itemize}
In this definition we assume that $\frac1{|X|}=0$ if the $G$-space $X$ is infinite.

\begin{proposition}\label{finite} A finite $G$-space $X$ possesses a syndetic submeasure if and only if $X$ is transitive.
\end{proposition}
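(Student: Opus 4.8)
The plan is to prove the two implications separately, with the forward direction (transitivity $\Rightarrow$ existence) being routine and the converse carrying the real content.

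For the \emph{if} part, suppose $X$ is transitive and take the normalized counting submeasure $\mu(A)=|A|/|X|$. Since $G$ acts on the finite set $X$ by bijections, $|gA|=|A|$, so $\mu$ is $G$-invariant; it is clearly a measure, hence a submeasure. To verify the syndetic clause I first observe that transitivity makes every singleton large: for $x\in X$ and each $y\in X$ choose $g_y\in G$ with $g_yx=y$; the finite set $F=\{g_y:y\in X\}$ satisfies $Fx=X$. Now, given $A$ with $\mu(A)<1$ (so $A\ne X$) and $\e>\frac1{|X|}$, pick any $x\in X\setminus A$ and set $L=\{x\}$: then $L\subseteq X\setminus A$ is large and $\mu(L)=\frac1{|X|}<\e$, as required.

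For the \emph{only if} part I argue by contraposition: assuming $X$ is not transitive I produce a failure of the syndetic clause for \emph{every} candidate submeasure $\mu$. The combinatorial engine is the fact that in a finite $G$-space a set $L$ is large if and only if it meets every orbit. Indeed, orbits are $G$-invariant, so $FL=X$ forces $L$ to meet each orbit, and conversely a set meeting every orbit contains an orbit-transversal, which is large by the singleton argument applied orbitwise. From this I extract two consequences for a syndetic $\mu$. First, if $\mu(A)<1$ then $X\setminus A$ must contain a large set and hence meet every orbit, so $A$ can contain no whole orbit; contrapositively $\mu(O)=1$ for every orbit $O$. Second, $G$-invariance together with transitivity of $G$ on $O$ forces all singletons inside $O$ to share a common value $s=\mu(\{x\})$, and subadditivity applied to $O=\bigcup_{x\in O}\{x\}$ gives $1=\mu(O)\le|O|\,s$, i.e. $s\ge\frac1{|O|}$.

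To finish, note that any large $L$ meets $O$, so $\mu(L)\ge s\ge\frac1{|O|}$. If $X$ is not transitive, choose a proper orbit $O$, so $|O|<|X|$ and hence $\frac1{|O|}>\frac1{|X|}$. Applying the syndetic clause with $A=\emptyset$ and $\e=\frac1{|O|}$ would yield a large $L$ with $\mu(L)<\frac1{|O|}$, contradicting the bound just obtained. I expect the main obstacle to be precisely this last chain: the inequalities $\mu(O)=1$ and $s\ge\frac1{|O|}$ rest on the interplay of subadditivity (rather than additivity) with $G$-invariance, and the role of the threshold $\e>\frac1{|X|}$ in the definition is exactly to separate $\frac1{|X|}$ from $\frac1{|O|}$ for a proper orbit, which is what makes non-transitivity detectable.
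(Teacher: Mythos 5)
Your proof is correct, and while the forward direction (normalized counting measure plus the observation that singletons are large) is the same as the paper's, your converse takes a genuinely different route. The paper applies the syndetic clause exactly once, to $A=\emptyset$ with the threshold $\e=\frac1{|X|-1}$, shrinks the resulting large set $L$ to an orbit transversal, and then builds an explicit covering set $F$ with $FL=X$ and $|F|=1+|X|-|L|$; subadditivity and $G$-invariance give $1=\mu(FL)\le|F|\cdot\mu(L)<\frac{|F|}{|X|-1}$, which forces $|L|=1$, i.e.\ a single orbit. You instead argue by contraposition and localize the quantitative estimate inside a single orbit: from the clause you first deduce $\mu(O)=1$ for every orbit $O$ (a large subset of $X\setminus O$ would have to meet $O$), then invariance and subadditivity spread this over singletons to give $\mu(\{x\})\ge\frac1{|O|}$ for $x\in O$, hence every large set has submeasure at least $\frac1{|O|}$; choosing a proper orbit and $\e=\frac1{|O|}>\frac1{|X|}$ contradicts the clause applied to $A=\emptyset$. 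Your version pays by invoking the syndetic clause twice, but it needs only the easy implication (large $\Rightarrow$ meets every orbit) rather than any cardinality-controlled covering construction in the converse, and it makes transparent why the strict threshold $\e>\frac1{|X|}$ is exactly what detects non-transitivity (it separates $\frac1{|X|}$ from $\frac1{|O|}$). The paper's version is more compact: a single application of the clause and one inequality chain, at the cost of the tailored covering set $F$ whose cardinality must be computed from the transversal structure of $L$ itself.
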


\begin{proof} If $X$ is transitive, then the counting measure $\mu:\mathcal P(X)\to[0,1]$, $\mu:A\mapsto |A|/|X|$, is syndetic.

Now assume conversely that a finite $G$-space $X$ admits a syndetic submeasure $\mu:\mathcal P(X)\to[0,1]$. If $X$ is a singleton, then $X$ is transitive. So, we assume that $X$ contains more than one point. Since the empty set $A=\emptyset$ has submeasure $\mu(A)=0<1$, for the number $\e=\frac1{|X|-1}>\frac1{|X|}$ there is a large subset $L\subset X\setminus A=X$ of submeasure $\mu(L)<\e$. It follows that $L$, being large in $X$, has non-empty intersection with each orbit $Gx$, $x\in X$. Replacing $L$ by a smaller subset we can assume that $L$ meets each orbit in exactly one point. For every point $x\in L$ we can find a finite subset $F_x\subset G$ of cardinality $|Gx|-1$ such that $F_xx=Gx\setminus \{x\}$. Then the set $F=\{1_G\}\cup\bigcup_{x\in L}F_x$ has cardinality $|F|=1+\sum_{x\in L}(|G_x|-1)=1-|L|+\sum_{x\in L}|G_x|=1-|L|+|X|$ and $FL=X$. By the subadditivity and the $G$-invariance of the submeasure $\mu$, we get
$$1=\mu(FL)\le|F|\cdot\mu(L)<|F|\cdot\e=\frac{|F|}{|X|-1}=\frac{1-|L|+X}{|X|-1},$$which implies $|L|=1$. This means that $X$ has exactly one orbit and hence is transitive.
\end{proof}

For $G$-spaces admitting a syndetic submeasure we have the following result completing Propositions~\ref{p1}--\ref{p3}.

\begin{theorem}\label{t5} Let $G$ be a countable group and $X$ be an infinite $G$-space possessing a syndetic submeasure $\mu:\mathcal P(X)\to[0,1]$. Then for every $k\in\IN$ there is a partition $X=A\cup B$ of $X$ into two $k$-meager subsets.
\end{theorem}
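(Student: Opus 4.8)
The plan is to first translate $k$-meagerness into a statement about large sets, and then to build the partition by reserving, for each small set of translations, a large set that witnesses non-thickness. I begin by recording the standard duality between thick and large sets: a set $S\subset X$ is large if and only if its complement $X\setminus S$ is not thick (if $FS=X$ and $F^{-1}x\subset X\setminus S$ then $x\notin FS$, a contradiction; conversely, if no $Fx$ avoids $S$ then $F^{-1}S=X$). Consequently a set $A$ is $k$-meager precisely when for every $K\subset G$ with $1\le|K|\le k$ the set
\[
X\setminus KA=\bigcap_{g\in K}g(X\setminus A)
\]
is large. Writing $B=X\setminus A$ and using $X\setminus gA=gB$, this says that $\bigcap_{g\in K}gB$ is large for every such $K$; symmetrically, $B$ is $k$-meager iff $\bigcap_{g\in K}gA$ is large for every such $K$. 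So the goal becomes: split $X=A\sqcup B$ so that for each $K$ both $\bigcap_{g\in K}gA$ and $\bigcap_{g\in K}gB$ contain a large set. Since $P\subset\bigcap_{g\in K}gA$ is equivalent to $K^{-1}P\subset A$, it suffices to reserve, for each $K$, a large set $P_K$ with $K^{-1}P_K\subset A$ and a large set $Q_K$ with $K^{-1}Q_K\subset B$.

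To produce these reserved sets I would run a recursion over the countable family of tasks $(K,\text{role})$, where $K$ ranges over the family of nonempty subsets of $G$ of size at most $k$ (countable, as $G$ is countable) and the role ranges over $\{P,Q\}$, arranged so that every task is eventually treated. At stage $j$ I would have pairwise disjoint large sets $L_1,\dots,L_{j-1}$, each of the form $K^{-1}P$, with $\sum_{i<j}\mu(L_i)<\tfrac1k$. Putting $U=L_1\cup\dots\cup L_{j-1}$, invariance and subadditivity give $\mu(KU)\le|K|\,\mu(U)\le k\,\mu(U)<1$, so the syndetic submeasure, applied to $KU$ with any $\e>0$ (permissible since $X$ is infinite, whence $\tfrac1{|X|}=0$), yields a large set $P_j\subset X\setminus KU$ of arbitrarily small submeasure. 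Then $L_j:=K^{-1}P_j$ is large (it contains a translate of the large set $P_j$), is disjoint from $U$ (since $P_j\cap KU=\emptyset$ forces $K^{-1}P_j\cap U=\emptyset$), and $\mu(L_j)\le k\,\mu(P_j)$ can be kept below $\tfrac1k 2^{-j-1}$, so the running total stays below $\tfrac1k$.

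Finally I would set $A:=\bigcup\{L_j:L_j\text{ comes from a }P\text{-task}\}$ and $B:=X\setminus A$. Because the $L_j$ are pairwise disjoint, every $Q$-task set $K^{-1}Q_K$ is disjoint from $A$ and hence lies in $B$, while every $P$-task set $K^{-1}P_K$ lies in $A$ by construction. Thus $\bigcap_{g\in K}gA\supset P_K$ and $\bigcap_{g\in K}gB\supset Q_K$ are large for all admissible $K$, and by the reformulation of the first paragraph both $A$ and $B$ are $k$-meager.

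The one point needing real care is the measure bookkeeping inside the recursion. The syndetic submeasure may only be invoked against a set of submeasure strictly below $1$, so everything hinges on maintaining $\mu(KU)<1$, i.e.\ $\mu(U)<\tfrac1k$, at every stage. This is exactly where I use that $\mu$ supplies large sets of arbitrarily small submeasure together with the cardinality bound $|K|\le k$; the merely subadditive (rather than additive) nature of $\mu$ is harmless here, since only the upper estimate $\mu(KU)\le k\,\mu(U)$ is ever required, never a lower bound on the complement.
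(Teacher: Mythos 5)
Your proposal is correct and follows essentially the same strategy as the paper's own proof: enumerate the countably many relevant translation sets $K$, recursively invoke the syndetic submeasure to reserve pairwise disjoint large sets of small submeasure whose $K^{-1}$-translates are committed to $A$ or to its complement, and conclude via the duality that a set is non-thick iff its complement contains a large set. The differences (interleaving the $P$- and $Q$-tasks into a single sequence rather than two sequences $A_n,B_n$, and the bookkeeping bound $\frac1k2^{-j-1}$ in place of the paper's $\frac1{k^22^n}$) are purely cosmetic.
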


\begin{proof} Fix any $k\in\IN$ and choose an enumeration $(K_n)_{n=1}^\infty$ of all $k$-element subsets of $G$.

Using the definition of a syndetic submeasure, we can inductively construct two sequences $(A_n)_{n=1}^\infty$ and $(B_n)_{n=1}^\infty$ of large subsets of $X$ satisfying the following conditions for every $n\in\IN$:
\begin{enumerate}
\item $A_n\subset X\setminus \bigcup_{i<n}K_n^{-1}K_iB_i$;
\item $\mu(A_n)< \frac1{k^22^n}$;
\item $B_n\subset X\setminus \bigcup_{i\le n}K_n^{-1}K_iA_i$;
\item $\mu(B_n)< \frac1{k^22^n}$.
\end{enumerate}
At each step the choice of the set $A_n$ is possible as
$$\mu(\bigcup_{i<n}K_n^{-1}K_iB_i)\le\sum_{i<n}\sum_{x\in K_n^{-1}K_i}\mu(xB_i)=\sum_{i<n}|K_n^{-1}K_i|\cdot\mu(B_i)\le\sum_{i<n}k^2\frac1{k^22^i}<1$$by the subadditivity of $\mu$. By the same reason, the set $B_n$ can be chosen.
\smallskip

After completing the inductive construction, we get the disjoint sets $A=\bigcup_{n=1}^\infty K_nA_n$ and $B=\bigcup_{n=1}^\infty K_nB_n$.

It remains to check that the sets $A$ and $X\setminus A$ are $k$-meager.
Given any $k$-element subset $K\subset G$ we need to prove that the sets $KA$ and $K(X\setminus A)$ are not thick. Find $n\in\IN$ such that $K_n=K^{-1}$.

Since the set $K_nB_n$ is disjoint with $A$, the large set $B_n$ is disjoint with $K_n^{-1}A=KA$,
which implies that $X\setminus KA$ is large and $KA$ is not thick.

Next, we show that the set $K(X\setminus A)=K_n^{-1}(X\setminus A)$ is not thick. We claim that  $A_n\subset X\setminus K_n^{-1}(X\setminus A)$. Assuming the converse, we can find a point $a\in A_n\cap K_n^{-1}(X\setminus A)$. Then $K_na$ intersects $X\setminus A$, which is not possible as $K_na\subset K_nA_n\subset A$. So, the set $X\setminus K(X\setminus A)\supset A_n$ is large, which implies that $K(X\setminus A)$ is not thick.
\end{proof}

\section{Toposyndetic submeasures on $G$-spaces}

In light of Theorem~\ref{t5} it is important to detect $G$-spaces possessing a syndetic submeasure.
We shall find such spaces among $G$-spaces possessing a toposyndetic submeasure. To define such submeasures, we need to recall some information from Measure Theory.

Let $\mu:\mathcal P(X)\to[0,1]$ be a submeasure on a set $X$. A subset $A\subset X$ is called {\em $\mu$-measurable} if $\mu(B)=\mu(B\cap A)+\mu(B\setminus A)$ for each subset $B\subset X$. By (the proof of) \cite[2.1.3]{Fed}, the family $\A_\mu$ of all $\mu$-measurable subsets of $X$ is an algebra (called the {\em measure algebra of $\mu$}) and the restriction $\mu|\A_\mu$ is additive in the sense that $\mu(A\cup B)=\mu(A)+\mu(B)$ for any disjoint $\mu$-measurable sets $A,B\in\A_\mu$.

A $G$-invariant submeasure $\mu:\mathcal P(X)\to[0,1]$ on a $G$-space $X$ will be called {\em toposyndetic} if $\A_\mu\cap\tau$ is a base of some $G$-bounded $G$-invariant regular topology $\tau$ on $X$. The {\em $G$-boundedness} of the topology $\tau$ means that each non-empty open set $U\in\tau$ is large in $X$. The $G$-boundedness of $\tau$ implies the density of all orbits $Gx$, $x\in X$, in the topology $\tau$.

\begin{theorem}\label{t3.1} If a $G$-space $X$ admits a toposyndetic submeasure, then each non-empty $G$-invariant subspace $Y\subset X$ possesses a syndetic submeasure.
\end{theorem}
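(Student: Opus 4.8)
The plan is to construct, from the toposyndetic submeasure $\mu$ on $X$, a new submeasure on the $G$-invariant subspace $Y$ that witnesses syndeticity. The natural candidate is the restriction of $\mu$ to subsets of $Y$, possibly after renormalization. Since $\mu$ is $G$-invariant and $Y$ is $G$-invariant, the restriction $\nu:=\mu|_{\mathcal P(Y)}$ is automatically $G$-invariant, monotone, and subadditive. The first thing to check is that $\nu$ is a genuine submeasure on $Y$, i.e. that $\nu(Y)=1$; I expect this to follow from the $G$-boundedness of $\tau$, because $Y$ contains a nonempty open set (being $G$-invariant and nonempty, $Y$ is $\tau$-dense, in fact it should be $\tau$-open or at least contain an open set of full measure), so $\mu(Y)$ cannot be strictly less than $1$. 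If necessary I would rescale by $\mu(Y)$.

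The heart of the matter is verifying the syndeticity clause: for each $A\subset Y$ with $\nu(A)<1$ and each $\e>\frac1{|Y|}$, I must produce a large subset $L\subset Y\setminus A$ with $\nu(L)<\e$ and $FL=Y$ for some finite $F\subset G$. Here is where the toposyndetic structure does the work. The idea is to pass to the measure algebra: since $\A_\mu\cap\tau$ is a base for $\tau$, and $\tau$ is $G$-bounded and regular, I would approximate the situation inside $X$ by open $\mu$-measurable sets. The key geometric input is $G$-boundedness, which says every nonempty $U\in\tau$ satisfies $FU=X$ for a finite $F\subset G$; intersecting with $Y$ and using that $Y$ is $G$-invariant and meets every orbit densely, such a $U$ should give $F(U\cap Y)=Y$. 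So to build $L$, I would find a nonempty open measurable set $U\in\A_\mu\cap\tau$ of small submeasure disjoint (or nearly disjoint) from $A$, then take $L=U\cap Y$.

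The mechanism for producing a small-measure open set is regularity together with additivity on $\A_\mu$. Because $\mu$ restricted to $\A_\mu$ is additive and $\tau$ is $G$-bounded, the base sets in $\A_\mu\cap\tau$ can be taken of arbitrarily small submeasure: a large open set covered by finitely many translates of $U$ forces $\mu(U)\ge\frac1{|F|}>0$, but by refining the cover and using subadditivity one controls $\mu(U)$ from above by any prescribed $\e$. More precisely, I would use regularity to separate a point (or a small measurable open neighborhood) of $Y\setminus\cl_\tau(A)$ from $A$; since orbits are $\tau$-dense, the set $Y\setminus A$ has nonempty $\tau$-interior structure on which I can select a measurable open $U$ with $\mu(U)<\e$, by chopping a fixed small-measure open set and invoking $G$-invariance to translate it off $A$.

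I expect the main obstacle to be the measure-theoretic bookkeeping that guarantees the existence of \emph{arbitrarily small} open measurable sets: one must rule out the degenerate case where every nonempty open set has submeasure bounded below, which would happen only if $X$ were essentially finite, and this is precisely where the constraint $\e>\frac1{|Y|}$ (with $\frac1{|Y|}=0$ for infinite $Y$) enters. Reconciling the finite and infinite cases — subdividing a fixed open measurable set into $\lceil 1/\e\rceil$ translates of small pieces using additivity on $\A_\mu$, and checking that at least one such piece avoids $A$ and remains large after translating — is the delicate step, and I would carry it out by a pigeonhole argument on the additive measure $\mu|_{\A_\mu}$.
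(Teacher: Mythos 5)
Your overall plan (work in the measure algebra $\A_\mu\cap\tau$, transfer largeness from $X$ to $Y$ via $G$-boundedness and the $G$-invariance of $Y$, and pigeonhole with the additivity of $\mu|\A_\mu$ to produce open measurable sets of submeasure $<\e$) is the paper's plan, but your choice of the induced submeasure is wrong, and this is a genuine gap rather than bookkeeping. The restriction $\nu=\mu|_{\mathcal P(Y)}$ need not be a submeasure at all: a dense $G$-invariant subspace can have $\mu(Y)=0$. Concretely, let $X=\mathbb R/\mathbb Z$ carry the Lebesgue \emph{outer} measure $\mu$ and let $G=Y=\mathbb Q/\mathbb Z$ act by translations; this $\mu$ is toposyndetic (the open Lebesgue measurable sets form a base, and the topology is $G$-bounded by compactness of $X$ and density of $G$), yet $\mu(Y)=0$ because $Y$ is countable. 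So $\nu\equiv 0$ and no rescaling can repair it; your claim that a dense $G$-invariant $Y$ ``should be $\tau$-open or at least contain an open set of full measure'' is false --- density gives no open subset whatsoever. The key idea you are missing is that the induced submeasure must be defined through closures in $X$: the paper sets $\lambda(A)=\mu(\overline{A})$ for $A\subset Y$, and then density of $Y$ gives $\lambda(Y)=\mu(X)=1$ for free.

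The same defect breaks your verification of syndeticity even in cases where $\mu(Y)=1$. From $\nu(A)<1$ you cannot produce any non-empty open set disjoint from $A$: a submeasure is not additive on arbitrary sets, so $A$ can be $\tau$-dense in $X$ while $\mu(A)<1$, and then \emph{every} translate of \emph{every} non-empty open set meets $A$ --- your device of ``invoking $G$-invariance to translate it off $A$'' cannot work. With the closure-based definition the problem evaporates: $\lambda(A)<1$ means $\mu(\overline{A})<1$, hence $\overline{A}\ne X$ and $X\setminus\overline{A}$ is a non-empty open set, inside which the paper runs exactly the argument you outline: pick a non-empty measurable open $U\subset X\setminus\overline{A}$, dispose of the isolated-point case separately (there $X$ is finite and transitive and Proposition~\ref{finite} applies), choose $n>1/\e$ pairwise disjoint non-empty measurable open sets $U_1,\dots,U_n\subset U$, and pigeonhole to get $\mu(U_i)\le\frac1n<\e$. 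One further step is forced by the closure-based definition and you would need it too: by regularity take a non-empty open $V$ with $\overline{V}\subset U_i$ and set $L=V\cap Y$, so that $\lambda(L)=\mu(\overline{V\cap Y})\le\mu(\overline{V})\le\mu(U_i)<\e$; taking $L=U_i\cap Y$ directly would give no bound on $\lambda(L)$, since the closure of $U_i$ may have much larger submeasure than $U_i$ itself.
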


\begin{proof} Let $\mu:\mathcal P(X)\to [0,1]$ be a toposyndetic submeasure on $X$ and $\tau$ be a  $G$-bounded $G$-invariant Tychonoff topology on $X$ such that $\A_\mu\cap\tau$ is a base of the topology $\tau$.

Fix any non-empty $G$-invariant subspace $Y\subset X$. The $G$-boundedness of the topology $\tau$ implies that $Y$ is dense in the topological space $(X,\tau)$. If the regular topological space $(X,\tau)$ has an isolated point $x$, then by the $G$-boundedness of the topology $\tau$ for the open set $U=\{x\}$ there is a finite set $F\subset G$ with $X=FU\subset Gx$, which means that $X$ is a finite transitive space. By the density of $Y$ in $X$, $Y=X$ and by Proposition~\ref{finite}, $Y$ possesses a syndetic submeasure.

So, we assume that the topological space $(X,\tau)$ has no isolated points.
The $G$-invariant submeasure $\mu$ induces a $G$-invariant submeasure $\lambda:\mathcal P(Y)\to[0,1]$ defined by $\lambda(A)=\mu(\bar A)$ for every subset $A\subset Y$, where $\bar A$ is the closure of $A$ in the topological space $(X,\tau)$.  To see that the submeasure $\lambda$ is syndetic, fix any $\e<\frac1{|Y|}=0$ and any subset $A\subset Y$ with $\lambda(A)<1$. Then $\mu(\bar A)=\lambda(A)<1$, which implies that $X\setminus \bar A$ is an open non-empty subset of $X$. Since $\A_\mu\cap\tau$ is a base of the topology $\tau$, there is a non-empty $\mu$-measurable open set $U\subset X\setminus \bar A\subset X\setminus A$. Since the topological space $(X,\tau)$ has no isolated points, we can fix pairwise disjoint non-empty open sets $U_1,\dots,U_n\subset U$ for some integer number $n>1/\e$. Since $\A_\mu\cap \tau$ is a base of the topology $\tau$, we can additionally assume that these open sets $U_1,\dots,U_n$ are $\mu$-measurable, which implies that $\sum_{i=1}^n\mu(U_i)\le 1$ and hence $\mu(U_i)\le \frac1n<\e$ for some $i\le n$. By the regularity of the topological space $(X,\tau)$, the open set $U_i$ contains the closure $\bar V$ of some non-empty open set $V\subset X$. The $G$-boundedness of $X$ guarantees that $V$ is large in $X$ and hence $V\cap Y$ is large in $Y$. Also $\lambda(V\cap Y)=\mu(\overline{V\cap Y})\le\mu(\bar V)\subset \mu(U_i)<\e$. This means that the submeasure $\lambda$ on $Y$ is syndetic.
\end{proof}

Many examples of $G$-spaces having a toposyndetic submeasure occur among subspaces of minimal compact measure $G$-spaces. By a {\em compact \textup{(}measure\textup{)} $G$-space} we understand a $G$-space $X$ endowed with a compact Hausdorff $G$-invariant topology $\tau_X$ (and a $G$-invariant probability Borel $\sigma$-additive measure $\lambda_X:\mathcal B(X)\to[0,1]$ defined on the $\sigma$-algebra $\mathcal B(X)$ of Borel subsets of $X$). A compact $G$-space $X$ is called {\em minimal} if each orbit $Gx$, $x\in X$, is dense in $X$.

\begin{theorem} If $(X,\tau_X,\lambda_X)$ is a minimal compact measure $G$-space, then each non-empty  $G$-invariant subspace $Y$ of $X$ possesses a (topo)syndetic submeasure.
\end{theorem}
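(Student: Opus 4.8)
The plan is to prove the statement by reducing it to Theorem~\ref{t3.1}: I would show that the ambient space $X$ itself carries a toposyndetic submeasure, which then automatically furnishes a syndetic submeasure on every non-empty $G$-invariant subspace $Y\subset X$. The witnessing topology will be $\tau_X$ itself, and the witnessing submeasure will be the outer measure generated by the given invariant measure $\lambda_X$.

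First I would define $\mu:\mathcal P(X)\to[0,1]$ by $\mu(A)=\inf\{\lambda_X(B):A\subset B\in\mathcal B(X)\}$. This is the usual outer measure, hence monotone and countably (in particular finitely) subadditive, with $\mu(\emptyset)=0$ and $\mu(X)=\lambda_X(X)=1$, so it is a submeasure. Since each $g\in G$ acts as a Borel automorphism preserving $\lambda_X$, the substitution $C=g^{-1}B$ gives $\mu(gA)=\mu(A)$, so $\mu$ is $G$-invariant. By the standard Carath\'eodory argument the $\mu$-measurable algebra $\A_\mu$ contains the Borel $\sigma$-algebra $\mathcal B(X)$ (for $M\in\mathcal B(X)$ and $A\subset X$, covering $A$ by a Borel $B$ with $\lambda_X(B)\le\mu(A)+\e$ and splitting along $M$ yields $\mu(A\cap M)+\mu(A\setminus M)\le\mu(A)+\e$), and on Borel sets $\mu$ agrees with $\lambda_X$.

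Next I would verify that $\tau_X$ is $G$-bounded, which is the one place where the hypotheses are genuinely used and which I expect to be the main (though mild) obstacle. Let $U\in\tau_X$ be non-empty and open. For each $x\in X$ minimality makes the orbit $Gx$ dense, so $Gx$ meets $U$, giving some $g$ with $gx\in U$, i.e. $x\in g^{-1}U$. Hence $\{g^{-1}U:g\in G\}$ is an open cover of $X$ (each $g$ is a homeomorphism by $G$-invariance of $\tau_X$), and compactness extracts a finite subcover $X=\bigcup_{i=1}^n g_i^{-1}U$, so that $FU=X$ for $F=\{g_1^{-1},\dots,g_n^{-1}\}$ and $U$ is large. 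Thus $\tau_X$ is $G$-bounded; it is $G$-invariant by hypothesis and regular since $X$ is compact Hausdorff. Because $\mathcal B(X)\subset\A_\mu$, every open set is $\mu$-measurable, so $\A_\mu\cap\tau_X=\tau_X$ is trivially a base of $\tau_X$. Consequently $\mu$ is a toposyndetic submeasure on $X$.

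Finally, I would apply Theorem~\ref{t3.1} to the toposyndetic submeasure $\mu$ to conclude that each non-empty $G$-invariant subspace $Y\subset X$ possesses a syndetic submeasure; for $Y=X$ the submeasure $\mu$ is already toposyndetic, which accounts for the parenthetical \emph{(topo)} in the statement. I do not anticipate any further difficulty: apart from the $G$-boundedness step resting on minimality plus compactness, everything reduces to the fact that open sets are $\mu$-measurable, which makes the base condition automatic.
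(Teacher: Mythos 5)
Your construction on the ambient space $X$ is correct, and it takes a genuinely different route from the paper. You build the outer measure $\mu(A)=\inf\{\lambda_X(B):A\subset B\in\mathcal B(X)\}$, check by the Carath\'eodory argument that all Borel (in particular all open) sets are $\mu$-measurable, and get $G$-boundedness of $\tau_X$ from minimality plus compactness alone: every orbit meets a non-empty open $U$, so $\{g^{-1}U:g\in G\}$ covers $X$ and a finite subcover gives $FU=X$. That covering argument is sound, and it is notably simpler than the paper's Lemma~\ref{l3.3}, which must invoke Birkhoff's theorem on uniformly recurrent points precisely because it treats arbitrary (non-compact, merely dense) $G$-invariant subspaces $Y$. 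Feeding your toposyndetic $\mu$ on $X$ into Theorem~\ref{t3.1} then yields a \emph{syndetic} submeasure on every non-empty $G$-invariant $Y\subset X$. By contrast, the paper never passes through the ambient space: it works on $Y$ directly, defining $\mu(A)=\lambda_X(\bar A)$ for $A\subset Y$ (closure in $X$), using the algebra of sets with $\lambda_X$-null boundary to produce a base of $\mu$-measurable open sets of $Y$, and using Lemma~\ref{l3.3} for $G$-boundedness of the subspace topology.

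There is, however, a gap relative to the full statement. The parenthetical \emph{(topo)} asserts that \emph{each} non-empty $G$-invariant subspace $Y$ carries a \emph{toposyndetic} submeasure, not merely a syndetic one; your reading that the ``(topo)'' is accounted for by the single case $Y=X$ is a misinterpretation. Theorem~\ref{t3.1}, as stated, only outputs a syndetic submeasure on subspaces, so your reduction cannot deliver toposyndeticity for proper $Y\subsetneq X$. Closing this gap requires exactly the extra work the paper does: one needs a submeasure defined on $\mathcal P(Y)$ together with a $G$-bounded $G$-invariant regular topology on $Y$ whose $\mu$-measurable open sets form a base, and here your compactness argument for $G$-boundedness breaks down, since $Y$ is only dense in $X$ and an open cover of $Y$ by translates need not have a finite subcover. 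This is why the paper needs the uniformly recurrent point of Birkhoff's theorem in Lemma~\ref{l3.3}, and the null-boundary algebra $\A_X=\{A\subset X:\lambda_X(\partial A)=0\}$ to verify the base condition on $Y$. So your proposal correctly and efficiently proves the syndetic half of the claim, but the toposyndetic half for proper invariant subspaces remains unproven.
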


\begin{proof} By the minimality of $X$, the $G$-invariant subspace $Y$ is dense in $X$.
Let $\tau=\{U\cap Y:U\in\tau_X\}$ be the induced topology on $Y$. The $G$-invariant measure $\lambda_X:\mathcal B(X)\to[0,1]$ induces a $G$-invariant submeasure $\mu:\mathcal P(Y)\to[0,1]$ defined by the formula $\mu(A)=\lambda_X(\bar A)$ for $A\subset Y$, where $\bar A$ denotes the closure of $A$ in the compact space $(X,\tau_X)$. To prove that the submeasure $\mu$ is toposyndetic, it remains to prove that the topology $\tau$ is $G$-bounded and $\A_\tau\cap\tau$ is a base of the topology $\tau$.

Consider the algebra $\A_X=\{A\subset X:\lambda_X(\partial A)=0\}$ consisting of subsets $A\subset X$ whose boundary $\partial A$ in $X$ have measure $\lambda_X(\partial A)=0$, and let $\A_Y=\{A\cap Y:A\in\A_X\}$. It can be shown that each set $A\subset\A_Y$ is $\mu$-measurable and $\A_Y\cap\tau\subset\A_\mu\cap\tau$ is a base of the topology $\tau$.
The $G$-boundedness of the topology $\tau$ on $Y$ is proved in the following lemma. Therefore, $\mu$ is a toposyndetic submeasure on $X$. By the proof of Theorem~\ref{t3.1}, the submeasure $\mu$ is syndetic.
\end{proof}

\begin{lemma}\label{l3.3} For each minimal compact $G$-space $X$, the induced topology on each $G$-invariant subspace $Y\subset X$ is $G$-bounded.
\end{lemma}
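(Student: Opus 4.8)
The plan is to reduce the statement to the $G$-boundedness of $X$ itself and then transfer the conclusion to the $G$-invariant subspace $Y$ by exploiting the invariance of $Y$ under the action.

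First I would show that the compact space $(X,\tau_X)$ is itself $G$-bounded. Fix a non-empty open set $U\subset X$. For every point $x\in X$ the minimality of $X$ guarantees that the orbit $Gx$ is dense, so $Gx\cap U\neq\emptyset$; choosing $g\in G$ with $gx\in U$ gives $x\in g^{-1}U$. Since the action is by homeomorphisms (the topology $\tau_X$ being $G$-invariant), each set $g^{-1}U$ is open, and the above shows $X=\bigcup_{g\in G}g^{-1}U$. By compactness of $X$ this open cover admits a finite subcover $X=\bigcup_{g\in F'}g^{-1}U$ for some finite $F'\subset G$; putting $F=(F')^{-1}$ we obtain $X=FU$, witnessing that $U$ is large in $X$.

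Next I would transfer this to $Y$. A non-empty open subset of the induced topology on $Y$ has the form $V=U\cap Y$ for some open $U\subset X$ with $U\cap Y\neq\emptyset$; in particular $U\neq\emptyset$, so by the previous step $X=FU$ for some finite $F\subset G$. Given any $y\in Y$, write $y=fu$ with $f\in F$ and $u\in U$. Then $u=f^{-1}y$ lies in $Y$ because $Y$ is $G$-invariant, so $u\in U\cap Y=V$ and hence $y=fu\in FV$. This proves $Y\subseteq FV$, while $FV\subseteq Y$ follows again from the $G$-invariance of $Y$; thus $FV=Y$ and $V$ is large in $Y$, as required.

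The argument is short, and the only genuine input is the interplay between minimality and compactness in the first step: density of every orbit upgrades $U$ to a cover of $X$ by open translates, and compactness then collapses this to a finite subcover. I do not anticipate a serious obstacle; the two points to watch are that the action be by homeomorphisms so that the translates $g^{-1}U$ are open, and that the invariance of $Y$ be used on \emph{both} sides to yield the exact equality $FV=Y$ rather than a mere inclusion.
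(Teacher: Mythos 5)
Your proof is correct, but it takes a genuinely different and more elementary route than the paper's. You first prove that $X$ itself is $G$-bounded via the classical covering characterization of minimality: density of every orbit makes $\{g^{-1}U:g\in G\}$ an open cover of $X$ (openness of the translates coming from the $G$-invariance of the topology), and compactness extracts a finite subcover, giving $X=FU$; you then transfer this to the invariant subspace $Y$, using $G$-invariance on both sides to get the exact equality $FV=Y$. The paper instead extends the given open subset $U\subset Y$ to an open $\widetilde U\subset X$, shrinks it by regularity of the compact Hausdorff space to a non-empty open $V$ with $\bar V\subset\widetilde U$, and invokes the Birkhoff theorem on existence of a uniformly recurrent point $y$: the return-time set $L=\{g\in G:gy\in s^{-1}V\}$ is large in $G$, so finitely many translates of $s^{-1}V$ contain the dense orbit $Gy$ and hence form a dense open set, whose closure (a finite union of translates of $\bar V\subset\widetilde U$) is all of $X$; restriction to $Y$ then finishes as in your argument. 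Your approach buys simplicity: it needs neither Birkhoff's theorem, nor regularity, nor the closure manipulation --- only compactness and the fact that the action is by homeomorphisms, both of which you correctly flag as the points to watch. The paper's route is heavier but runs through the dynamical notion of uniform recurrence (syndetic return times), tying the lemma to standard topological dynamics machinery; for the statement as given, nothing beyond your compactness argument is required.
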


\begin{proof}  To show that the induced topology on $Y$ is $G$-bounded, fix any non-empty open subset $U\subset Y$. Find an open set $\widetilde U\subset X$ such that $\widetilde U\cap Y=U$.  By the regularity of the compact Hausdorff space $X$, there is a non-empty open subset $V\subset X$ with $\bar V\subset \tilde U$.

By a classical Birkhoff theorem in Topological Dynamics (see e.g. Theorem 19.26 \cite{HS}), the minimal compact $G$-space $X$ contains a uniformly recurrent point $y\in X$. The uniform recurrence of $y$ means that for each open neighborhood $O_y\subset X$ of $y$ the set $\{g\in G:gy\in O_y\}$ is large in $G$. By the density of the orbit $Gy$ there is $s\in G$ with $sy\in V$. Then $s^{-1}V$ is a neighborhood of $y$ and by the uniform recurrence of $y$, the set $L=\{g\in G:gy\in s^{-1}V\}$ is large in $G$. Consequently, we can find a finite subset $F\subset G$ such that $G=FL$. Then $Gy=FLy\subset Fs^{-1}V$, which implies that the open set $Fs^{-1}V$ is dense in $X$. Consequently, $X=Fs^{-1}\bar V\subset Fs^{-1}\tilde U$ and $Y=Fs^{-1}(Y\cap\tilde U)=Fs^{-1}U$, witnessing that the topology of $Y$ is $G$-bounded.
\end{proof}

\section{Groups possessing a toposyndetic submeasure}

In this section we shall detect groups possessing a toposyndetic submeasure. Each group $G$ will be considered as a $G$-space endowed with the natural left action of the group $G$. A group $G$ is called {\em amenable} if it admits a $G$-invariant additive measure $\mu:\mathcal P(G)\to[0,1]$.

We shall say that a $G$-space $X$ has a {\em free orbit} if for some $x\in X$ the map $\alpha_x:G\to X$, $\alpha_x:g\mapsto gx$, is injective.

\begin{theorem}\label{t4.1} A group $G$ admits a toposyndetic submeasure if one of the following conditions holds:
\begin{enumerate}
\item there is a minimal compact measure $G$-space $X$ with a free orbit;
\item $G$ is a subgroup of a compact topological group;
\item $G$ is countable;
\item $G$ is amenable.
\end{enumerate}
\end{theorem}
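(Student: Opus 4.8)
The plan is to establish the four sufficient conditions for a group $G$ to admit a toposyndetic submeasure, proceeding from the most general construction to the special cases, and using the reductions among the conditions wherever possible. First I would prove (1), which is the engine: if there is a minimal compact measure $G$-space $X$ with a free orbit $Gx_0$, then by the previous theorem the $G$-invariant subspace $Y=Gx_0$ carries a toposyndetic submeasure $\mu_Y$. Because the orbit map $\alpha_{x_0}\colon G\to Y$ is a $G$-equivariant bijection, I would pull $\mu_Y$ back along $\alpha_{x_0}$ to obtain a left-invariant submeasure on $G$ itself, together with the pulled-back topology; equivariance guarantees that $G$-invariance, $G$-boundedness, regularity, and the property that $\A_\mu\cap\tau$ is a base are all preserved under this transport. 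This gives a toposyndetic submeasure on $G$ and settles (1).

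Next I would reduce the remaining three conditions to (1) by exhibiting, in each case, a minimal compact measure $G$-space with a free orbit. For (2), if $G$ is a subgroup of a compact topological group $H$, I would take $X=H$ (or the closure $\overline{G}$ of $G$ in $H$) with the left $G$-action, the Haar probability measure $\lambda_H$ as $\lambda_X$, and the compact topology; minimality follows since the orbit $Gh$ is dense in $\overline{G}h=\overline{G}$ by left-invariance, and the orbit of the identity is free because the $G$-action on $H$ is free. For (3), if $G$ is countable, the standard move is to embed $G$ into a compact group: I would use the Bohr compactification, or more concretely note that a countable group admits a monomorphism into the compact group $\prod$ of its finite or unitary quotients, or into the unitary group of $\ell^2(G)$ via the left regular representation composed with a suitable compactification — so that (3) reduces to (2). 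For (4), if $G$ is amenable, the invariant mean supplies an invariant additive measure, which I would combine with a minimal compact $G$-space (such as a minimal subsystem of the Bebutov/greatest ambit system, furnished by the Birkhoff recurrence theorem already invoked in Lemma~\ref{l3.3}) carrying a pushed-forward invariant measure; amenability guarantees the existence of an invariant Borel probability measure on any compact $G$-space, and a free orbit can be arranged by working inside a faithful minimal system.

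The main obstacle I anticipate is condition (4), and secondarily the precise implementation of (3). For (4) the difficulty is twofold: one must produce a compact $G$-space that simultaneously is minimal, carries a $G$-invariant Borel probability measure, and possesses a free orbit, and these three demands pull in different directions. Amenability delivers the invariant measure on any compact $G$-space, so the real work is in choosing a minimal system in which some orbit is free; here I would start from a faithful compact $G$-space, pass to a minimal subset by Zorn's lemma (compactness plus the $G$-action guarantees a minimal closed invariant subset), and then verify that faithfulness can be upgraded to the existence of a free orbit — for instance by taking the product of the chosen system with a free, possibly non-minimal, compact model and projecting, or by working in the universal minimal flow. Checking that freeness survives the passage to a minimal subsystem is the delicate point, since minimal subsystems of a faithful system need not retain a free orbit, and this is exactly where I expect the argument to require the most care. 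For (3), the embedding into a compact group must be into one whose Haar measure restricts nontrivially along $G$, which the Bohr compactification handles cleanly, so I regard (3) as reducible to (2) with only routine verification once the embedding is fixed.
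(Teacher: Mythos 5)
Your handling of conditions (1) and (2) is essentially the paper's own route: for (1) the paper builds the topology $\tau=\{\alpha_x^{-1}(U):U\in\tau_X\}$ and the submeasure $\mu(A)=\lambda(\overline{Ax})$ directly on $G$, which is exactly the transport along the orbit bijection that you describe; for (2) it uses the Haar measure, and your parenthetical correction is essential --- you must act on the closure $\overline{G}$ of $G$ in the ambient compact group, not on the whole group $H$, since the $G$-action on $H$ is not minimal unless $G$ is dense.

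The genuine gap is in condition (3). Your reduction of (3) to (2) rests on the claim that every countable group embeds into a compact topological group, and that claim is false. A discrete group embeds into a compact group only if it is maximally almost periodic, and by Peter--Weyl together with Malcev's theorem a finitely generated group embeds into a compact group if and only if it is residually finite. Hence Higman's finitely presented group (which has no nontrivial finite quotients), or any infinite finitely generated simple group, is a countable group admitting no injective homomorphism into any compact group: its Bohr compactification map is not injective, the product of its finite quotients is trivial, and the unitary group of $\ell^2(G)$ in any group topology making the regular representation continuous is not compact. So none of the three devices you list produces the required embedding. This is precisely why the paper invokes the deep theorem of B.~Weiss (\emph{Minimal models for free actions}): for \emph{every} countable group there exists a minimal compact measure $G$-space with a free orbit; this input cannot be replaced by a compactification argument. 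Condition (4) is also left open in your proposal: you correctly isolate the difficulty (producing a minimal compact $G$-space with a free orbit, after which amenability supplies the invariant measure), but your suggested fix --- taking a product with a free system and projecting --- destroys minimality, as you yourself suspect. The standard fact that closes this gap is that every discrete group acts freely on $\beta G$: for each $g\ne 1_G$ there is a partition $G=A_1\cup A_2\cup A_3$ with $gA_i\cap A_i=\emptyset$ for all $i$, whence $gp\ne p$ for every ultrafilter $p$; consequently any minimal closed invariant subset $M\subset\beta G$ is a free minimal compact $G$-space, and for amenable $G$ it carries an invariant Borel probability measure, so (1) applies. Your passing mention of the universal minimal flow is the right instinct, but without this freeness theorem your argument for (4), like that for (3), remains incomplete.
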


\begin{proof} 1. Assume that $(X,\tau_X,\lambda_X)$ is a minimal compact measure $G$-space with a free orbit. In this case there is a point $x\in X$ for which the map $\alpha_x:G\to Gx\subset X$, $\alpha_x:g\mapsto gx$, is injective. This map allows us to define a Tychonoff $G$-invariant topology $$\tau=\{\alpha^{-1}_x(U):U\in\tau_X\}$$ on the group $G$. By Lemma~\ref{l3.3}, the topology $\tau$ is $G$-bounded.

Since the orbit $Gx$ is dense in $X$ (which follows from the minimality of $X$), the formula
$$\mu(A)=\lambda(\overline{Ax})\mbox{ \ for \ }A\subset G$$
determines a $G$-invariant submeasure on $G$. Observe that $\mathcal B=\{U\in\tau_X:\lambda(\bar U)=\lambda(U)\}$ is a base of the topology $\tau_X$ on $X$ and $\A=\{\alpha_x^{-1}(U):U\in\mathcal B\}$ is a base of the topology $\tau$ on $G$. It can be verified that each set $A\in\A$ is $\mu$-measurable, which implies that $\A_\mu\cap\tau\supset\A$ is a base of the topology $\tau$. This means that the submeasure $\mu$ is toposyndetic.
\smallskip

2. The second statement follows immediately from the first statement and the well-known fact that each compact topological group carries an invariant probability Borel measure (namely, the Haar measure).
\smallskip

3. The third statement follows from the first one an a recent deep result of B.Weiss \cite{Weiss} stating that for each countable group $G$ there is a compact minimal measure $G$-space with a free orbit.
\smallskip

4. The fourth statement follows from the first statement and the well-known fact \cite[\S449]{Fremlin} stating for any amenable group $G$, each compact $G$-space $X$ possesses a $G$-invariant probability Borel measure.
\end{proof}

\begin{problem} Is the class of groups admitting a toposyndetic submeasure hereditary with respect to taking subgroups?
\end{problem}

\begin{problem} Has every group a toposyndetic submeasure?
\end{problem}

\begin{problem} Has the group $S_X$ of all bijections of an infinite set $X$ a toposyndetic submeasure?
\end{problem}

\section{Groups possessing a syndetic submeasure}

In this section we shall detect groups possessing a syndetic submeasure. By Theorem~\ref{t3.1} the class of such groups contains all groups possessing a toposyndetic submeasure, in particular, all countable groups.

\begin{theorem}\label{t5.1} A group $G$ possesses a syndetic submeasure if one of the following conditions is satisfied:
\begin{enumerate}
\item there is an infinite transitive $G$-space  possessing a syndetic submeasure;
\item there is an infinite minimal compact measure $G$-space;
\item $G$ admits a homomorphism onto an infinite group possessing a (topo)syndetic submeasure;
\item $G$ admits a homomorphism onto a countable infinite group;
\item $G$ contains an amenable infinite normal subgroup.
\end{enumerate}
\end{theorem}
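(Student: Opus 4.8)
The plan is to prove each of the five sufficient conditions by reducing the existence of a syndetic submeasure on $G$ to Theorem~\ref{t5} and the results of the previous section, peeling off the conditions from the most structural to the most concrete. The unifying observation is that a syndetic submeasure need not live on $G$ itself: if $G$ acts on some \emph{other} $G$-space $X$ that carries a syndetic submeasure, I can often pull the submeasure back along the action, because largeness and invariance are both preserved by $G$-equivariant maps that have dense (or surjective) image. So the real content is to produce, for each hypothesis, a suitable auxiliary transitive or compact $G$-space.

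I would begin with statement (1), which is essentially a pullback argument and should serve as the engine for (3), (4), and (5). Suppose $X$ is an infinite transitive $G$-space with a syndetic submeasure $\nu:\mathcal P(X)\to[0,1]$. Fix a point $x_0\in X$ and use the orbit map $\alpha:G\to X$, $\alpha:g\mapsto gx_0$, which is surjective by transitivity. Define $\mu(A)=\nu(\alpha(A))=\nu(Ax_0)$ for $A\subset G$. Monotonicity and subadditivity transfer immediately since $\alpha(A\cup B)=\alpha(A)\cup\alpha(B)$, and $\mu$ is $G$-invariant because $\alpha(gA)=g\,\alpha(A)$ and $\nu$ is $G$-invariant; also $\mu(G)=\nu(X)=1$. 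The syndetic clause is where care is needed: given $A\subset G$ with $\mu(A)<1$ and $\e>0$, I have $\nu(Ax_0)<1$, so there is a large $L\subset X\setminus Ax_0$ with $\nu(L)<\e$ and $FL=X$ for some finite $F$. I then set $B=\alpha^{-1}(L)\setminus A$ and check that $B$ lies off $A$, has $\mu(B)\le\nu(L)<\e$, and is large in $G$: from $FL=X$ and surjectivity of $\alpha$ one recovers a finite $F'\subset G$ with $F'B=G$. Statement (2) follows from (1) once I note that an infinite minimal compact measure $G$-space need not be transitive, so instead I invoke the theorem just proved before Lemma~\ref{l3.3}, which hands me a (topo)syndetic submeasure on any $G$-invariant subspace; taking a single dense orbit as that subspace gives an infinite transitive $G$-space with a syndetic submeasure, and then (1) applies.

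For the homomorphism statements (3) and (4), let $q:G\to H$ be a surjective homomorphism onto an infinite group $H$ with a (topo)syndetic submeasure. By Theorem~\ref{t3.1} a toposyndetic submeasure yields a syndetic one, so $H$ carries a syndetic submeasure $\nu_H$; viewing $H$ as an infinite transitive $G$-space via $g\cdot h=q(g)h$, the map $\mu(A)=\nu_H(q(A))$ is exactly the pullback of (1), and $G$-invariance holds because $q$ is a homomorphism. Statement (4) is the special case where $H$ is countable, since every countable group has a toposyndetic (hence syndetic) submeasure by Theorem~\ref{t4.1}(3). Finally, for (5), let $N\trianglelefteq G$ be an amenable infinite normal subgroup. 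By Theorem~\ref{t4.1}(4) the group $N$, being amenable, admits a toposyndetic and hence syndetic submeasure $\nu_N$ as an $N$-space; normality lets $G$ act on $N$ by conjugation-twisted left translation, so $N$ becomes an infinite transitive $G$-space, and I pull $\nu_N$ back through the coset/orbit structure as before.

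The step I expect to be the main obstacle is verifying $G$-invariance in case (5): the left-translation action of $G$ on the \emph{set} $N$ is not internal to $N$ unless one uses normality to rewrite $gn = (gng^{-1})g$, and $gng^{-1}$ need not stay controlled by $\nu_N$ unless the submeasure on $N$ is invariant under the \emph{conjugation} action of $G$, not merely under left translation by $N$. The clean way around this is to average: since $N$ is amenable one can choose $\nu_N$ to be invariant under the full action of $G$ by automorphisms (inner on $N$), or equivalently to pull back along the quotient map $G\to G/N'$ for a suitable characteristic structure, and this averaging is exactly where amenability of $N$ is used a second time. Once the ambient invariance is arranged, the largeness-preservation and the $\e$-bound go through routinely as in (1).
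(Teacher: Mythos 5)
Your arguments for conditions (1)--(4) are correct and essentially coincide with the paper's: (1) is the same pullback $\mu(A)=\nu(Ax_0)$ along the orbit map (the paper leaves the verification as ``one can check''; yours is complete and right), (3) and (4) are the same reductions via Theorem~\ref{t3.1} and Theorem~\ref{t4.1}(3), and for (2) your route --- apply the unnumbered theorem preceding Lemma~\ref{l3.3} to a single dense orbit, which is infinite because a finite dense subset of a Hausdorff space would be closed and hence all of $X$, and then invoke (1) --- is a legitimate mild variant of the paper's direct construction $\mu(A)=\mu_X(\overline{Ax})$ on $G$.

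Condition (5), however, has a genuine gap, and it is the part of the theorem carrying the real content (it is what produces a syndetic submeasure on $S_X$, whose quotient $S_X/FS_X$ is non-amenable). Two steps fail. First, the auxiliary $G$-space you want does not exist in general: a transitive action of $G$ on the set $N$ whose restriction to the subgroup $N$ is left translation forces the stabilizer $K$ of $1_N$ to satisfy $G=NK$ and $N\cap K=\{1\}$, i.e.\ such an action exists precisely when the extension $1\to N\to G\to G/N\to 1$ splits. Already for $G=\mathbb{Z}$ and $N=2\mathbb{Z}$ there is no such action, so ``conjugation-twisted left translation'' is not defined. Second, your proposed repair --- averaging $\nu_N$ to make it invariant under conjugation by $G$ --- is exactly the step that cannot be carried out: averaging over a group action requires amenability of the \emph{acting} group, which here is (the image in automorphisms of $N$ of) $G$ itself, not $N$; amenability of $N$ only lets you average over inner automorphisms of $N$. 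Replacing the average by a supremum over conjugates preserves invariance and subadditivity but destroys the syndetic clause, since $\sup_{g\in G}\nu_N(gBg^{-1})<\e$ is far stronger than $\nu_N(B)<\e$. The paper's proof of (5) is not a pullback at all: it defines the submeasure directly by
$$\sigma_N(A)=\inf_{\mu\in P_\w(N)}\sup_{y\in G}\mu(Ay),$$
where $P_\w(N)$ is the set of finitely supported probability measures on $N$. Right-invariance is built into the formula; left-invariance uses normality of $N$ (replace $\mu=\sum_i\alpha_i\delta_{a_i}$ by $\sum_i\alpha_i\delta_{xa_ix^{-1}}$); subadditivity is where amenability of $N$ enters, via Emerson's characterization (for $\mu_A,\mu_B\in P_\w(N)$ there exist $\mu_A',\mu_B'\in P_\w(N)$ with $\|\mu_A*\mu_A'-\mu_B*\mu_B'\|<\e$ in $\ell_1(N)$); and the syndetic property is proved by taking a maximal $E$-separated subset $B\subset G\setminus A$ for a large finite $E\subset N$ and testing $B$ against the uniform measure on $E^{-1}$. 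None of this is recoverable from statement (1), so your treatment of (5) needs to be replaced by an argument of this kind.
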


\begin{proof} 1. Assume that $X$ is an infinite transitive $G$-space possessing a syndetic submeasure $\lambda:\mathcal P(X)\to[0,1]$. Fix any point $x\in X$ and consider the map $\alpha_x:G\to X$, $\alpha_x:g\mapsto gx$, which is surjective (by the transitivity of the $G$-space $X$). One can check that the syndetic submeasure $\lambda$ on $X$ induces a syndetic submeasure $\mu:\mathcal P(G)\to[0,1]$ defined by $\mu(A)=\lambda(\alpha_x(A))=\lambda_X(Ax)$ for $A\subset G$.
\vskip3pt

2. Let $(X,\tau_X,\mu_X)$ be an infinite minimal compact measure $G$-space. By the minimality, the orbit $Gx$ of any point $x\in X$ is dense in $(X,\tau_X)$. Then the formula $\mu(A)=\mu_X(\overline{Ax})$, $A\subset X$, determines a $G$-invariant submeasure $\mu:\mathcal P(G)\to[0,1]$ on the group $G$. We claim that the submeasure $\mu$ is syndetic. Given any $\e>\frac1{|G|}$ and a set $A\subset G$ with $\mu(A)<1$, we should find a large set $L\subset G\setminus A$ with $\mu(L)<\e$. Since $\mu_X(\overline{Ax})=\mu(A)<1$, the closed subset $\overline{Ax}$ is not equal to $X$. By the minimality, the infinite compact $G$-space $(X,\tau_X)$ has no isolated points, which allows us to find an open non-empty set $U\subset X\setminus\overline{Ax}$ such that $\mu_X(\overline{U})<\e$. By Lemma~\ref{l3.3}, the topology $\tau_X$ is $G$-bounded, which implies that the set $U\subset X$ is large in $X$ and hence $V=\alpha_x^{-1}(U)\subset X\setminus A$ is large in $G$ and has submeasure $\mu(V)\le\mu_X(\bar U)<\e$.
\vskip3pt

3. The third statement follows from the first statement and Theorem~\ref{t3.1}.
\vskip5pt

4. The fourth statement follows from the third statement and Theorem~\ref{t4.1}(3).
\vskip5pt

5. Suppose that the group $G$ contains a normal infinite amenable subgroup $H$. Denote by $P_\w(H)$ the set of finitely supported probability measures on $H$. Each measure $\mu\in P_\w(H)$ can be written as a convex combination $\mu=\sum_{i=1}^n \alpha_i\delta_{x_i}$ of Dirac measures concentrated at points $x_i$ of $H$. This allows us to identify $P_\w(H)$ with a convex subset
of the Banach space $\ell_1(H)$ endowed with the norm $\|f\|=\sum_{x\in H}|f(x)|$.

We claim that the function
$$\sigma_H:\mathcal P(G)\to[0,1],\;\;\sigma_H:A\mapsto \inf_{\mu\in P_\w(H)}\sup_{y\in G}\mu(Ay),$$is a syndetic left-invariant submeasure on $G$.

First we prove that $\sigma_H$ is left-invariant. Given any $x\in G$ and $A\subset G$ it suffices to check that $\sigma_H(xA)\le \sigma_H(A)+\e$ for every $\e>0$. The definition of $\sigma_H$ guarantees that $\sigma_H$ is right-invariant. Consequently, $\sigma_H(xA)=\sigma_H(xAx^{-1})$. By the definition of $\sigma_H(A)$, there is a finitely supported probability measure $\mu\in P_\w(H)$ such that $\sup_{y\in G}\mu(Ay)<\sigma_H(A)+\e$. Write $\mu$ as a convex combination $\mu=\sum_{i=1}^n\alpha_i\delta_{a_i}$ of Dirac measures concentrated at points $a_1,\dots,a_n\in H$. Since $H$ is a normal subgroup of $G$, the probability measure $\mu'=\sum_{i=1}^n\alpha_i\delta_{xa_ix^{-1}}$ belongs to $P_\w(H)$. Taking into account that for every $y\in G$
$$\mu'(xAx^{-1}y)=\mu'(xAx^{-1}yxx^{-1})=\mu(Ax^{-1}yx),$$ we conclude that
$$\sigma_H(xAx^{-1})\le \sup_{y\in G}\mu'(xAx^{-1}y)\le\sup_{y\in G}\mu(Ax^{-1}yx)<\sigma_H(A)+\e.$$
So, $\sigma_H$ is left-invariant.
\smallskip

Next, we prove that $\sigma_H$ is subadditive. Given two subsets $A,B\subset G$, it suffices to check that $\sigma_H(A\cup B)\le\sigma_H(A)+\sigma_H(B)+3\e$ for every $\e>0$. By the definition of the numbers $\sigma_H(A)$ and $\sigma_H(B)$, there are finitely supported probability measures $\mu_A,\mu_B\in P_\w(H)$ such that $\sup_{y\in G}\mu_A(Ay)<\sigma_H(A)+\e$ and $\sup_{y\in G}\mu_B(By)<\sigma_H(By)+\e$. By Emerson's characterization of amenability \cite[1.7]{Emerson}, for the probability measures $\mu_A$ and $\mu_B$ there are probability measures $\mu'_A,\mu'_B\in P_\w(H)$ such that $$\sup_{C\subset H}|\mu_A*\mu'_A(C)-\mu_B*\mu'_B(C)|\le\|\mu_A*\mu'_A-\mu_B*\mu_B'\|<\e.$$ Write the measures $\mu_A$, $\mu_B$, $\mu_A'$ and $\mu_B'$ as convex combinations of Dirac measures:
$$\mu_A=\sum_{i}\alpha_i\delta_{x_i},\;\mu_A'=\sum_{j}\alpha'_j\delta_{x'_j},\;
\mu_B=\sum_{i}\beta_i\delta_{y_i},\;\;\mu'_B=\sum_{j}\beta'_j\delta_{y'_i}.$$
Then $\mu_A*\mu'_A=\sum_{i,j}\alpha_i\alpha_j'\delta_{x_ix_j'}$ and $\mu_B*\mu'_B=\sum_{i,j}\beta_i\beta_j'\delta_{y_iy_j'}$. For every $y\in G$ we get
$$
\begin{aligned}
\mu_A*\mu'_A(Ay)&=\sum_{i,j}\alpha_i\alpha_j'\delta_{x_ix_j'}(Ay)=\sum_j\alpha_j'\sum_i\alpha_i\delta_{x_i}(Ay(x_j')^{-1})=\\
&=\sum_j\alpha_j'\,\mu_A(Ay(x_j)'^{-1})\le\sum_j\alpha_j'\sup_{z\in G}\mu_A(Az)=\sup_{z\in G}\mu_A(Az)<\sigma_H(A)+\e.
\end{aligned}
$$
By analogy we can prove that $\mu_B*\mu_B'(By)\le\sigma_H(B)+\e$. Now consider the measure $\nu=\mu_A*\mu_A'$ and observe that for every $y\in B$ we get
$$\nu(By)=\mu_A*\mu_A'(By)\le\mu_B*\mu_B'(By)+\|\mu_A*\mu_A'-\mu_B*\mu_B'\|< \sigma_H(B)+\e+\e.$$
Then $$\sigma_H(A\cup B)\le\sup_{y\in G}\nu((A\cup B)y)\le\sup_{y\in G}\nu(Ay)+\sup_{y\in G}\nu(By)<\sigma_H(A)+\e+\sigma_H(B)+2\e=\sigma_H(A)+\sigma_H(B)+3\e,$$
which proves the subadditivity of $\sigma_H$.
\smallskip

Finally we prove that the left-invariant submeasure $\sigma_H$ on $G$ is syndetic.
Fix a subset $A\subset G$ of submeasure $\sigma_H(A)<1$ and take an arbitrary $\e>0$. Since $\sigma_H(A)<1$, there is a finitely supported measure $\mu\in P_\w(H)$ such that $\sup_{y\in G}\mu(Ay)<1$. Write $\mu$ as the convex combination $\mu=\sum_{i=1}^n\alpha_i\delta_{x_i}$ of Dirac measures. We can assume that each coefficient $\alpha_i$ is positive. Then the finite set $F=\{x_1,\dots,x_n\}$ coincides with the support $\supp(\mu)$ of the measure $\mu$.

It follows that for every $y\in G$ we get $\mu(Ay)<1$ and hence $F=\supp(\mu)\not\subset Ay$. This ensures that the set $Fy^{-1}$ meets the complement $X\setminus A$ and hence $y^{-1}\in F^{-1}(G\setminus A)$. So,  $G=F^{-1}(G\setminus A)$ and the set $X\setminus A$ is large in $G$. Now take any finite subset $E\subset H$ of cardinality $|E|>1/\e$. Using Zorn's Lemma, choose a maximal subset $B\subset G\setminus A$ which is $E$-separated in the sense that $Ex\cap Ey=\emptyset$ for any distinct points $x,y\in B$. The maximality of the set $B$ guarantees that for each $x\in G\setminus A$ the set $Ex$ meets $EB$, which implies that $G\setminus A\subset E^{-1}EB$ and $G=F^{-1}(G\setminus A)=F^{-1}E^{-1}EB$. This means that the set $B$ is large in $G$. We claim that $|E^{-1}\cap By|\le 1$ for each $y\in G$. Assume conversely that $E^{-1}\cap By$ contains two distinct points $by$ and $b'y$ with $b,b'\in B$. Then $b'b^{-1}=b'y(by)^{-1}\in E^{-1}E$ and hence $Eb'\cap Eb\ne\emptyset$, which is not possible as $B$ is $E$-separated.
Now consider the uniformly distributed probability measure $\nu=\frac1{|E|}\sum_{x\in E^{-1}}\delta_x\in P_\w(H)$ and observe that $\sigma_H(B)\le\sup_{y\in G}\nu(By)\le \frac{|E^{-1}\cap By|}{|E|}\le\frac1{|E|}<\e$, which means that the submeasure $\sigma_H$ is syndetic.
\end{proof}

\begin{remark} For an infinite amenable group $G$ and the subgroup $H=G$ the syndetic submeasure  $\sigma_H$ (used in the proof of Theorem~\ref{t5.1}(5)) coincides with the right Solecki submeasure $\sigma^R$ introduced in \cite{Sol} and studied in \cite{Ban}.
\end{remark}

Theorem~\ref{t5.1}(5) implies:

\begin{corollary} The group $S_X$ of bijections of any set $X$ possesses a syndetic submeasure.
\end{corollary}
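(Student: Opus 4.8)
The plan is to reduce the corollary to Theorem~\ref{t5.1}(5), which asserts that a group possessing an infinite amenable normal subgroup carries a syndetic submeasure. If $X$ is finite, then $S_X$ is a finite group acting transitively on itself, so by Proposition~\ref{finite} the counting submeasure $A\mapsto|A|/|S_X|$ is already syndetic (and the case of $X$ empty or a singleton is trivial, $S_X$ being the trivial group). So the real work lies in the case when $X$ is infinite.

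For infinite $X$ I would take the subgroup $H=FS_X$ of all finitely supported bijections of $X$, the same subgroup already singled out before Example~\ref{e4a}. This $H$ is infinite because $X$ carries infinitely many transpositions, each lying in $FS_X$; and it is normal in $S_X$ because conjugation by $g\in S_X$ sends a bijection $f$ with $\supp(f)=S$ to a bijection with support $g(S)$, preserving finiteness of the support. The remaining point is that $H$ is amenable: writing $FS_X=\bigcup_F S_F$ as the directed union, over finite subsets $F\subset X$, of the finite symmetric groups $S_F$ (each extended by the identity off $F$), we see that $FS_X$ is locally finite, and a directed union of finite (hence amenable) groups is amenable. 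Granting these three facts, $H$ is an infinite amenable normal subgroup of $S_X$, and Theorem~\ref{t5.1}(5) produces the desired syndetic submeasure on $S_X$.

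The only step carrying genuine content is the amenability of $FS_X$, and even this rests on the standard stability of amenability under directed unions: an invariant mean on $FS_X$ arises as a weak-$*$ cluster point of the normalized counting measures on the groups $S_F$. No new estimate is needed, since the analytic heart of the matter---the construction of the syndetic submeasure $\sigma_H$ from an amenable normal subgroup $H$---is exactly what the proof of Theorem~\ref{t5.1}(5) already supplies.
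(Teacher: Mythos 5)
Your proposal is correct and follows exactly the paper's own route: the finite case is handled by Proposition~\ref{finite} applied to $S_X$ acting on itself, and the infinite case uses that $FS_X$ is an infinite, normal, locally finite (hence amenable) subgroup, so Theorem~\ref{t5.1}(5) applies. The extra details you supply (normality via conjugation of supports, amenability via the directed union $FS_X=\bigcup_F S_F$) are correct elaborations of the paper's one-line justification.
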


\begin{proof} If $X$ is finite, then the finite group $S_X$ has a syndetic submeasure according to proposition~\ref{finite}. So, we assume that the set $X$ is infinite. Observe that the subgroup  $FS_X$ of finitely supported permutations of $X$ is locally finite and hence amenable. By Theorem~\ref{t5.1}(5) the group $S_X$ admits a syndetic submeasure as it contains the infinite amenable normal subgroup $FS_X$.
\end{proof}

\begin{problem} Has every group a syndetic submeasure?
\end{problem}

\begin{problem} Has the quotient group $S_\w/FS_\w$ a syndetic submeasure?
\end{problem}

\section{Partitions of groups into $k$-meager pieces}\label{s6}

Now we return to the problem of partitioning groups into $k$-meager pieces, which was posed and partly resolved in \cite{PS}. Combining Theorems~\ref{t5} and \ref{t5.1}(5), we get:

\begin{theorem}\label{c7} Each countable infinite group $G$ for every $k\in\IN$ admits a partition into two $k$-meager subsets.
\end{theorem}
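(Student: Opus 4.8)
The plan is to obtain the required partition as a direct application of Theorem~\ref{t5} to the $G$-space $X=G$ equipped with the left regular action $g\cdot x=gx$. Since $G$ is infinite, this action makes $G$ an infinite $G$-space, so the only hypothesis of Theorem~\ref{t5} that needs checking is that $G$, viewed as a $G$-space over itself, carries a syndetic submeasure $\mu:\mathcal P(G)\to[0,1]$. Everything thus reduces to producing one such submeasure.

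To supply it I would invoke the existence results of the preceding sections. The cleanest route is through the toposyndetic machinery: by Theorem~\ref{t4.1}(3) every countable group admits a toposyndetic submeasure, and by Theorem~\ref{t3.1} a toposyndetic submeasure on a $G$-space restricts to a syndetic submeasure on each non-empty $G$-invariant subspace, in particular on the whole space $G$ itself. Alternatively, one may apply Theorem~\ref{t5.1}(4) to the identity homomorphism $\mathrm{id}:G\to G$, which is a surjection of $G$ onto the countable infinite group $G$, and conclude directly that $G$ possesses a syndetic submeasure. Either way the hypotheses of Theorem~\ref{t5} are met.

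With the syndetic submeasure in hand, Theorem~\ref{t5} produces, for each prescribed $k\in\IN$, a partition $G=A\cup B$ into two $k$-meager subsets, which is precisely the assertion. So at this level the theorem is a pure assembly step: no new estimate is needed, and the only care required is to confirm that the left regular $G$-space $G$ genuinely fits the quantitative framework (it is infinite and syndetically measured) rather than one of the pathological cases such as Examples~\ref{e4} and~\ref{e4a}.

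I expect the genuinely hard work to reside in the earlier sections rather than in this final combination. Establishing a syndetic submeasure on an arbitrary countable group ultimately rests on B.~Weiss's deep theorem that every countable group admits a minimal compact measure $G$-space with a free orbit (used in Theorem~\ref{t4.1}(3)), while the passage from the submeasure to the partition is the delicate inductive construction inside Theorem~\ref{t5}: at stage $n$ one must choose large sets $A_n,B_n$ of submeasure below $\frac{1}{k^22^n}$ avoiding the finitely many translates $K_n^{-1}K_iB_i$ and $K_n^{-1}K_iA_i$, and the feasibility of each choice is exactly what the subadditivity and left-invariance of $\mu$, combined with the defining syndeticity property, guarantee. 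Once these two pillars are granted, the proof of the final statement is immediate.
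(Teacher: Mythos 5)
Your proof is correct and is structurally the same as the paper's: Theorem~\ref{c7} is obtained there in a single line by feeding a syndetic submeasure on $G$ into Theorem~\ref{t5} applied to the left regular $G$-space $X=G$. The one divergence is worth recording, and it is in your favor. The paper's stated source for the submeasure is Theorem~\ref{t5.1}(5) (existence of an infinite amenable normal subgroup), which does not literally cover every countable infinite group: a non-abelian free group has no infinite amenable normal subgroup, since by the Nielsen--Schreier theorem any such subgroup would be free, an amenable free group is cyclic, and a free group of rank $\ge 2$ has no infinite cyclic normal subgroup. So the paper's citation is evidently a slip. Your two routes --- Theorem~\ref{t4.1}(3) (Weiss's theorem gives a toposyndetic submeasure on any countable group) followed by Theorem~\ref{t3.1} with $Y=X=G$, or equivalently Theorem~\ref{t5.1}(4) applied to the identity homomorphism --- are the references that actually do the job, and they agree with what the paper itself asserts at the start of Section 5, namely that all countable groups possess a syndetic submeasure. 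In short, your assembly is the intended argument with the misattributed citation repaired; nothing is missing.
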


This theorem admits a self-generalization.

\begin{corollary}\label{c7a} If a group $G$ has a countable infinite quotient group, then for every $k\in\IN$ the group $G$ admits a partition into two $k$-meager subsets.
\end{corollary}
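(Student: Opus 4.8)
The plan is to lift a partition of the countable infinite quotient. Let $\pi\colon G\to Q$ be a surjective homomorphism onto a countable infinite group $Q$, and let $N=\ker\pi$, which is a normal subgroup of $G$. Fix $k\in\IN$. Applying Theorem~\ref{c7} to $Q$ gives a partition $Q=A'\cup B'$ into two $k$-meager subsets. I would then set $A=\pi^{-1}(A')$ and $B=\pi^{-1}(B')$; since $A'\cap B'=\emptyset$ and $A'\cup B'=Q$, the sets $A,B$ are complementary and form a partition $G=A\cup B$. It then remains to show that $A$ and $B$ are $k$-meager in $G$.

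The core of the argument is a transfer principle for thickness along $\pi$. First I would prove the lemma: for every subset $S\subset Q$ the preimage $\pi^{-1}(S)$ is thick in $G$ if and only if $S$ is thick in $Q$. For the direction that is needed, given a finite $F'\subset Q$ I would lift it to a finite $F\subset G$ with $\pi(F)=F'$ (one preimage per point); then thickness of $\pi^{-1}(S)$ yields a point $x\in G$ with $Fx\subset\pi^{-1}(S)$, and applying $\pi$ gives $F'\pi(x)=\pi(Fx)\subset S$, so $S$ is thick in $Q$. The converse is proved symmetrically, pushing a witness down by $\pi$ and lifting it back. This is the key step.

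Next I would record the saturation identity $KA=\pi^{-1}\big(\pi(K)A'\big)$ for every $K\subset G$. This holds because $A=\pi^{-1}(A')$ is a union of cosets of the normal subgroup $N$, so that $gA=\pi^{-1}\big(\pi(g)A'\big)$ for each $g\in G$; taking the union over $g\in K$ yields the identity. Here the normality of $N$ is exactly what guarantees that a left translate of a $\pi$-saturated set is again $\pi$-saturated, and this is the second point requiring care.

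Finally, to see that $A$ is $k$-meager I would argue by contradiction. If $KA$ were thick for some $K\subset G$ with $|K|\le k$, then by the identity $KA=\pi^{-1}\big(\pi(K)A'\big)$ together with the transfer lemma the set $\pi(K)A'$ would be thick in $Q$; but $|\pi(K)|\le|K|\le k$, contradicting the $k$-meagerness of $A'$ in $Q$. Hence $A$ is $k$-meager, and the identical argument applied to $B'$ shows that $B$ is $k$-meager. The only substantive work is the transfer lemma and the saturation identity; once these are established, the cardinality bound $|\pi(K)|\le|K|$ closes the argument at once, so I expect the transfer lemma to be the main obstacle.
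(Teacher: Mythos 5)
Your proposal is correct and follows exactly the paper's approach: pull back the partition of the countable infinite quotient along the homomorphism, the preimages being $k$-meager because thickness transfers along the quotient map and $|\pi(K)|\le|K|$. The paper states this lifting in one line without proof; your transfer lemma and saturation identity $KA=\pi^{-1}\bigl(\pi(K)A'\bigr)$ are precisely the details it leaves implicit (note only that the saturation identity follows from $\pi$ being a homomorphism alone, no separate appeal to normality is needed).
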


\begin{proof} Let $h:G\to H$ be a homomorphism of $G$ onto a countable infinite group $H$. By Theorem\ref{c7}, for every $k\in\IN$ the countable group $H$ admits a partition $H=A\cup B$ into two $k$-meager subsets. Then $G=h^{-1}(A)\cup h^{-1}(B)$ is a partition of the group $G$ into two $k$-meager subsets.
\end{proof}

Corollary~\ref{c7a} gives a partial answer to the following (still open) problem posed in and partially answered in \cite{PS}.

\begin{problem} Is it true that each infinite group $G$ for every $k\in\IN$ admits a partition into two  $k$-meager sets?
\end{problem}

\section{Acknowledgement}

The authors would like to express their sincere thanks to an anonymous referee who turned our attention to minimal measure-preserving actions of countable groups, which allowed us to prove the existence of toposyndetic submeasures on countable groups and construct partitions of such groups into $k$-meager sets.
\newpage

\end{document}